\newtheorem{thm}{Theorem}[section]
\newtheorem{lem}[thm]{Lemma}
\newtheorem{defn}[thm]{Definition}
\newtheorem{conj}[thm]{Conjecture}
\newtheorem{ques}[thm]{Question}
\theoremstyle{remark}
\newtheorem*{remark}{Remark}
\newcommand{\ci}{\mathcal{I}}
\newcommand{\cz}{\mathcal{Z}}
\newcommand{\ca}{\mathcal{A}}
\newcommand{\cb}{\mathcal{B}}
\newcommand{\ct}{\mathscr{T}}
\newcommand{\C}{\mathbb{C}}
\newcommand{\srem}{s^R}
\newcommand{\llim}{\underleftarrow{lim}}
\newcommand{\ad}{\mathrm{Ad}_u}
\def\leqMvN{\preccurlyeq_{_\mathbf{M}}}
\def\lequ{\preceq_{_\mathbf{u}}}
\def\lessu{\prec_{_\mathbf{u}}}
\def\geqMvN{\succcurlyeq_{_\mathbf{M}}}
\def\V#1{V_{#1}}
\def\P#1{\Phi(#1)}
\def\Pf{\Phi}  
\def\Pzf{\Phi_z}  
\def\Pp#1{\Phi_p(#1)}  
\def\Ppf{\Phi_p}  
\def\II{\ci}
\def\TT{\ct}
\def\cat#1{\mathbf{\mathsf{#1}}}
\def\Spec{\mathrm{Spec}}
\def\Prim{\mathrm{Prim}}
\newcommand{\longhookrightarrow}{\ensuremath{\lhook\joinrel\relbar\joinrel\rightarrow}}
\def\setdef#1#2{\left\{#1\mid #2\right\}}             
\def\Mdot{ \text{ .}}
\def\Mcomma{ \text{ ,}}
\def\Msemicolon{ \text{ ;}}
\title{Partial and Total Ideals \\ of von Neumann Algebras}
\author{Nadish de Silva \quad\quad Rui Soares Barbosa}
\address{
Quantum Group, Department of Computer Science, University of Oxford \\
Wolfson Building\\
Parks Road\\
Oxford OX1 3QD \\
UK.}
\email{nadish.desilva@utoronto.ca}
\email{rui.soares.barbosa@cs.ox.ac.uk}
\begin{document}

\pagestyle{empty}
\pagenumbering{gobble}

\binoppenalty=\maxdimen
\relpenalty=\maxdimen

\begin{abstract}
  A notion of \emph{partial ideal} for an operator algebra is a weakening the notion of ideal where the defining algebraic conditions are enforced only in the commutative subalgebras.
  We show that, in a von Neumann algebra, the ultraweakly closed two-sided ideals, which we call \emph{total ideals}, correspond to the unitarily invariant partial ideals.
  The result also admits an equivalent formulation in terms of central projections.
  We place this result in the context of an investigation into notions of spectrum of noncommutative $C^*$-algebras.\end{abstract}

\maketitle

\section{Introduction}

The principal theorem proved in this paper concerns the von Neumann algebraic analogue of a conjecture made for $C^*$-algebras.  We describe the original question concerning $C^*$-algebras, state the von Neumann algebraic version, and outline the more general investigation into noncommutative topology in the context of which the original question first arose.  In the next sections, we prove the principal theorem after establishing some technical preliminaries.  We conclude by indicating some ideas about how to tackle the $C^*$-algebraic case.

\subsection{Partial and total ideals of $C^*$-algebras}
All algebras and subalgebras considered throughout this paper are  assumed to be unital.  By a {\em total ideal} of a $C^*$-algebra $\ca$, we mean a norm closed, two-sided ideal of $\ca$.

\begin{defn} \label{pi}
A \emph{partial ideal} of a  $C^*$-algebra $\ca$ is a map $\pi$ that assigns to each commutative sub-$C^*$-algebra $V$ of $\ca$ a closed ideal of $V$ such that $\pi(V) = \pi(V') \cap V$ whenever $V \subset V'$.
\end{defn}

\begin{remark}
Suppose $\II$ is the contravariant functor from the category of $C^*$-algebras to the category of complete meet-semilattices which sends an algebra to its lattice of total ideals and a $*$-homomorphism $\phi: \ca \longrightarrow \cb$ to the homomorphism of complete meet-semilattices   $\II(\phi): \II(\cb) \longrightarrow \II(\ca)$ taking an ideal $I \subset B$ to the ideal $\phi^{-1}(I)$ of $\ca$.
Then a partial ideal is precisely a choice of $\pi(V) \in \II(V)$ for each commutative sub-$C^*$-algebra $V$ of $\ca$ such that whenever there is an inclusion morphism $\iota: V \longhookrightarrow V'$, then
\[\pi(V) = \II(\iota)(\pi({V'})) 
= \pi(V') \cap V \Msemicolon\]
i.e. the following diagram commutes.
\begin{equation}\label{diag:approx}
\xymatrix{
V' & & \{*\} \ar[rr]^{* \mapsto \pi(V')} \ar[ddrr]_{* \mapsto \pi(V)} & & \II(V') \ar[dd]^{\II(\iota)}
\\
&&&&
\\
V \ar@{^{(}->}[uu]^{\iota} & & & & \II(V)
}
\end{equation}
\end{remark}
\begin{remark}
  The concept of partial ideal was introduced by Reyes \cite{Reyes:obstructing} in the more general context of partial $C^*$-algebras.
  His definition differs slightly but is equivalent in our case:
  a subset $P$ of normal elements of $\ca$ such that $P \cap V$ is a closed ideal of $V$ for all commutative sub-$C^*$-algebras $V$ of $\ca$.
\end{remark}

Partial ideals exist in abundance: every closed, left (or right) ideal $I$ of $\ca$ gives rise to a partial ideal $\pi_I$ in a natural way by choosing $\pi_{I}(V)$ to be $I \cap V$.

For example, in a matrix algebra $M_n(\C)$, the right ideal $pM_n(\C)$, for $p \in M_n(\C)$ a nontrivial projection, yields a nontrivial partial ideal of $M_n(\C)$ in this way.
  As matrix algebras are simple, it cannot be the case that these nontrivial partial ideals also arise as $\pi_I$ from a total ideal $I$. 
This raises a natural question: 

\begin{ques} Which partial ideals of $C^*$-algebras arise from total ideals? \end{ques}

Some partial ideals do not even arise from left or right ideals: for example, choosing arbitrary nontrivial ideals from every nontrivial commutative sub-$C^*$-algebra of $M_2(\C)$ yields, in nearly all cases,\ nontrivial partial ideals of $M_2(\C)$.  However, a hint towards identifying those partial ideals which arise from total ideals is given by a simple observation.  If $\ad :\ca \longrightarrow \ca$ is an inner automorphism of $\ca$---that is, one given by conjugation by a unitary $u$ of $\ca$---then $\ad(I) = I$ for any total ideal $I \subset\ca$.  This imposes a special condition on the partial ideal $\pi_{I}(V) = I \cap V$ which arises from $I$.

\begin{defn} \label{ipi}
An {\em invariant partial ideal} $\pi$ of a $C^*$-algebra $\ca$ is a partial ideal of $\ca$ such that, for each commutative sub-$C^*$-algebra $V \subset \ca$ and any unitary $u \in \ca$, the rotation by $u$ of the ideal associated to $V$ is the ideal associated to the rotation by $u$ of $V$.  That is, 
\[\ad(\pi(V)) = \pi({\ad(V)}) \Mdot\]
\end{defn}

\begin{remark}
  Imposing the invariance condition on partial ideals is equivalent to extending the requirement on maps $\pi$ of Diagram \eqref{diag:approx} from inclusion maps
to all $*$-homomorphisms $\ad|_V^{V'}: V \longrightarrow V'$ arising as a restriction of the domain and codomain of an  inner automorphism.
An invariant partial ideal is precisely a choice of $\pi(V) \in \II(V)$ for each commutative sub-$C^*$-algebra $V$ of $\ca$ such that whenever there is a morphism $\ad|_V^{V'}: V \longrightarrow V'$ as above, then
  \[\pi(V) = \II(\ad|_{V}^{V'})(\pi({V'})) = \mathrm{Ad}_{u^*}(\pi(V')) \cap V \Msemicolon\]
  i.e. the following diagram commutes.
\[
\xymatrix{
V' & & \{*\} \ar[rr]^{* \mapsto \pi(V')} \ar[ddrr]_{* \mapsto \pi(V)} & & \II(V') \ar[dd]^{\II(\ad|_{V}^{V'})}
\\
&&&&
\\
V \ar[uu]_{\ad|_{V}^{V'}} & & & & \II(V)
}
\]

\end{remark}

%
%

Thus, we arrive at the simplest possible conjecture:

\begin{conj} \label{partialidealsc}
A partial ideal of a $C^*$-algebra arises from a total ideal if and only if it is an invariant partial ideal.
Consequently, the map $I \longmapsto \pi_I$ is a bijective correspondence between total ideals and invariant partial ideals.
\end{conj}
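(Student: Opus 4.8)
The plan is to dispatch the two easy components first and then isolate the single hard direction. Given a total ideal $I$ of $\ca$, the assignment $\pi_I(V) := I \cap V$ is manifestly a partial ideal, and it is invariant because $\ad(I) = I$ for every unitary $u \in \ca$, so that $\ad(\pi_I(V)) = \ad(I) \cap \ad(V) = \pi_I(\ad V)$. For injectivity of $I \longmapsto \pi_I$, I would observe that a positive $a \in \ca$ lies in $I$ if and only if $a \in I \cap C^*(a,1) = \pi_I(C^*(a,1))$, so $\pi_I$ determines $I_+$ and hence determines $I = \overline{\mathrm{span}}\, I_+$. Everything then reduces to the surjectivity claim: every invariant partial ideal $\pi$ has the form $\pi_I$ for a (then necessarily unique) total ideal $I$.

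For that direction the candidate is $I :=$ the norm-closed two-sided ideal of $\ca$ generated by the set $N := \bigcup_V \pi(V)$ of normal elements, the union ranging over commutative sub-$C^*$-algebras $V \subseteq \ca$. The inclusion $\pi(V) \subseteq I \cap V$ is immediate; the content is the reverse inclusion $I \cap V \subseteq \pi(V)$, i.e. that forming the generated two-sided ideal does not enlarge any of the pieces $\pi(V)$. As preparation I would record the closure properties of $N$ that do follow from the axioms. If $a \in \pi(V_1)$ and $b \in \pi(V_2)$ commute, then $W := C^*(a,b,1)$ is commutative and contains $V_1$ and $V_2$, so $a, b \in \pi(W)$ and hence $a^*, a+b, ab \in \pi(W) \subseteq N$; and by invariance $u a u^* \in \ad(\pi(V_1)) = \pi(u V_1 u^*) \subseteq N$ for every unitary $u$. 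Thus $N$ is closed under adjoints, under sums and products of commuting elements, and under unitary conjugation, and one would try to bootstrap the desired identity $I \cap V = \pi(V)$ from these.

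The hard part will be exactly what these properties do not provide: for $a \in \pi(V)$ and an arbitrary $c \in \ca$, the element $c a c^*$ need not commute with anything already in $N$, and expanding $c$ as a linear combination of unitaries only produces cross terms $u_i a u_j^*$ that are neither normal nor visibly in $N$. So the obstacle is to show that the two-sided ideal generated by the normal set $N$ meets each commutative $V$ in precisely $\pi(V)$ — and this is where the $C^*$-algebraic case resists the von Neumann techniques, in which the ultraweakly closed two-sided ideals are exactly the ideals $zM$ for central projections $z$, each value $\pi(V)$ is $q_V V$ for a projection $q_V \in V$, and unitary invariance together with the comparison theory of projections can be used to glue the $q_V$ into a single central projection $z$. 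A $C^*$-algebraic substitute for this gluing step is unclear, which is why the statement is offered as a conjecture; a plausible line of attack is to present $\pi$ as a compatible family of open sets $U_V \subseteq \Spec V$ and attempt to descend them along the maps $\Spec V \to \Prim \ca$, but controlling all $V$ simultaneously is exactly the crux.
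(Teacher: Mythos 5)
Your proposal matches the paper's own treatment of this statement: it is left as a conjecture, with only the easy direction (a total ideal $I$ yields an invariant partial ideal $\pi_I$) and the injectivity of $I \longmapsto \pi_I$ actually established, and with surjectivity onto the invariant partial ideals correctly identified as the open content (which the paper resolves only in the von Neumann algebra setting, exactly by the central-projection and comparison-theory route you sketch). One minor slip in your preparatory discussion: $W = C^*(a,b,1)$ need not contain $V_1$ or $V_2$; to conclude $a \in \pi(W)$ you should instead compare both $V_1$ and $W$ with their common subalgebra $C^*(a,1)$, using $a \in \pi(V_1) \cap C^*(a,1) = \pi(C^*(a,1)) = \pi(W) \cap C^*(a,1)$ --- the conclusion stands, and nothing downstream is affected.
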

Note that the first part of the statement says that the map  $I \longmapsto \pi_I$ is surjective onto the invariant partial ideals.
The second part of the statement follows easily from this, since injectivity of this map is obvious: the left inverse is given by mapping an invariant partial ideal of the form $\pi_I$ to the linear span of $\bigcup_{V}\pi(V)$, which is equal to $I$ itself.

\subsection{Partial and total ideals of von Neumann algebras}

One may define \emph{partial ideal} (resp. \emph{invariant partial ideal}) for a  von Neumann algebra by replacing in Definition \ref{pi} (resp. Definition \ref{ipi})  the occurrences of ``commutative sub-$C^*$-algebra''with ``commutative sub-von-Neumann-algebra'' and ``closed ideal'' with ``ultraweakly closed ideal''.  A total ideal of a von Neumann algebra is an ultraweakly closed, two-sided ideal.
As before, a total ideal $I$ determines an invariant partial ideal $\pi_I$ in the same way, and the map $I \longmapsto \pi_I$ is injective.


Establishing the analogue of Conjecture \ref{partialidealsc} for von Neumann algebras
provides some measure of evidence for the original conjecture's verity,
and its proof may be adapted to show that the original conjecture holds for a large class of---or perhaps all---$C^*$-algebras.

\begin{thm}[Principal theorem] \label{main}
A partial ideal of a von Neumann algebra arises from a total ideal if and only if it is an invariant partial ideal.
Consequently, the map $I \longmapsto \pi_I$ is a bijective correspondence between total ideals and invariant partial ideals.
\end{thm}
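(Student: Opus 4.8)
The plan is to prove the surjectivity of $I \longmapsto \pi_I$ onto invariant partial ideals; the bijectivity then follows from the injectivity already noted in the excerpt. So let $\pi$ be an invariant partial ideal of a von Neumann algebra $\mathbf{M}$, and let $I$ be the ultraweak closure of the linear span of $\bigcup_V \pi(V)$, the union taken over all commutative von Neumann subalgebras $V \subset \mathbf{M}$. The goal is to show that $I$ is a (two-sided, ultraweakly closed) ideal and that $\pi_I = \pi$. First I would reduce everything to projections: in a von Neumann algebra, an ultraweakly closed left (or two-sided) ideal is generated by its projections, and likewise each $\pi(V)$, being an ultraweakly closed ideal of the commutative von Neumann algebra $V$, is of the form $p_V V$ for a unique projection $p_V \in V$ — indeed $p_V$ is the supremum in $V$ (equivalently in $\mathbf{M}$) of the projections lying in $\pi(V)$. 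So the data of $\pi$ is equivalent to an assignment $V \mapsto p_V$ of a projection $p_V \in V$, compatible with inclusions in the sense that $p_V = p_V \wedge \chi$ for suitable spectral projections, and unitarily invariant: $u p_V u^* = p_{uVu^*}$.

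The heart of the argument is to show this family of projections is controlled by a single central projection. For a projection $q \in \mathbf{M}$, the algebra $V_q = \{q\}'' \cong \mathbb{C}^2$ (or $\mathbb{C}$ if $q$ is trivial) has $p_{V_q}$ equal to either $0$, $q$, $1-q$, or $1$. The key step is to establish: (i) if $q \leq p_V$ for the relevant $V$ then $q \in I$, and conversely $I$ is spanned by such projections; (ii) using unitary invariance, if $q \in I$ and $q'$ is a projection Murray–von Neumann equivalent to a subprojection of $q$, then $q' \in I$ — this is because $q' = v q v^*$ for a partial isometry $v$, and conjugating by a unitary extending $v$ (available after compressing to a corner, or by the standard trick of passing to $M_2(\mathbf{M})$ and back, or directly since any partial isometry in a von Neumann algebra with equivalent initial/final complements extends to a unitary, handled case by case using that $q,q'$ or their complements are "large"). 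Combined with the fact that $I$ is stable under $q \mapsto q_1 + q_2$ for orthogonal $q_i \in I$ (from $I$ being a linear subspace closed under functional calculus on self-adjoints), one deduces that the set $P(I)$ of projections in $I$ is a hereditary, $\vee$-closed, unitarily-invariant set of projections, which forces it to be $\{q : q \leq z\}$ for $z$ the supremum of $P(I)$, and $z$ must be central since it is fixed by all unitary conjugations. Hence $I = z\mathbf{M}$ is a two-sided ultraweakly closed ideal.

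It remains to check $\pi_I = \pi$, i.e. $I \cap V = \pi(V)$, equivalently $p_V = z \wedge$ (appropriate projection) $= z$ viewed in $V$ when $z \in V$; the inclusion $\pi(V) \subseteq I \cap V$ is immediate, and for the reverse one uses that any projection $q \in I \cap V$ satisfies $q \leq z$, and then the invariance/compatibility conditions on $\pi$ together with the description $z = \sup P(I)$ force $q \leq p_V$. Concretely: $z \leq p_{V_z}$ where $V_z = \{z\}''$, and for general $V$ containing a projection $q \leq z$ one threads through $V_q \subseteq V$ and $V_q \subseteq V_z$ and unitary conjugates to transport membership; the compatibility $\pi(V_q) = \pi(V) \cap V_q$ then yields $q \in \pi(V)$.

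The main obstacle I anticipate is the Murray–von Neumann transport step (ii): showing that the set of projections in $I$ is closed under passing to equivalent projections, and more delicately under $q \mapsto $ equivalent-to-a-subprojection, purely from unitary invariance of $\pi$ rather than from $\pi$ already being known two-sided. A partial isometry $v$ with $v^*v = q'$, $vv^* \leq q$ need not extend to a unitary of $\mathbf{M}$ in general, so one must argue by cases according to the relative sizes of $q, 1-q, q', 1-q'$, possibly using halving of projections (available in any von Neumann algebra with no abelian summand, with the abelian/type I$_{\mathrm{fin}}$ part handled separately where the statement is easier), or exploit that it suffices to prove $z := \sup P(I)$ is central, which can be attacked directly by showing $uzu^* = z$ for every unitary $u$ — and $uzu^* = u(\sup P(I))u^* = \sup(uP(I)u^*)$, so one only needs $uP(I)u^* \subseteq P(I)$, i.e. invariance of $P(I)$ under unitary conjugation, which follows more readily from unitary invariance of $\pi$ via $u\pi(V)u^* = \pi(uVu^*)$. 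Getting this reduction clean, and then separately verifying $\pi_I = \pi$ on every $V$ (not just recovering $I$), is where the real work lies.
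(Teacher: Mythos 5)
Your reduction to projections is sound and matches the paper's own reformulation (each $\pi(V)$ is $p_V V$ for a projection $p_V\in V$, consistency and invariance translate as you say), and your observations that the resulting set of ``good'' projections $G=\{q: q\le p_{V_q}\}$ is hereditary, closed under suprema of \emph{commuting} subfamilies, and unitarily invariant --- so that $z=\sup G$ is central --- are all correct and are exactly the paper's Lemmas 2.1 and 2.2. The genuine gap is the clause ``which forces it to be $\{q: q\le z\}$ for $z$ the supremum.'' Nothing forces this: $z$ is a supremum over a highly non-commuting family, so the closure under commuting suprema does not put $z$ (or even $C(q)$ for a single good $q$) into $G$, and heredity only propagates \emph{downward} from projections already known to be good. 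This is where the entire difficulty of the theorem is concentrated, and your proposal supplies no mechanism for it. The paper's mechanism is its Main Lemma: given a good projection $q$, use Zorn's lemma to extract a maximal \emph{partially orthogonal} (hence commuting) subset $M$ of the unitary orbit of $q$ containing $q$; all of $M$ is good by invariance, so $s=\sup M$ is good by commuting-sup closure; then the comparison theorem for projections, together with the maximality of $M$, shows the remainder $C(q)-s$ is strictly dominated by a single further conjugate $uqu^*$, hence is good by heredity; finally $C(q)=s\vee(C(q)-s)$ is a commuting join of good projections, hence good. Without this covering argument (or a substitute), your central $z$ is merely an upper bound for $G$ and you cannot conclude $I=z\mathcal{A}$ or $\pi_I=\pi$.

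Two secondary points. First, the obstacle you single out as the main one --- transporting membership along Murray--von Neumann subequivalences by extending partial isometries to unitaries --- is a red herring: the paper never needs it, because in its argument the two projections being compared ($s$ and $C(q)-s$) are \emph{orthogonal}, which is precisely what allows the Murray--von Neumann comparison to be witnessed by a genuine unitary. Second, working with the ultraweakly closed linear span $I$ of $\bigcup_V\pi(V)$ creates problems you then have to paper over: the claim that $I$ is ``closed under functional calculus on self-adjoints'' is unjustified before you know $I$ is an ideal (a closed linear span of ideals of various subalgebras has no such closure property a priori), and the set $P(I)$ of projections lying in that span could a priori be strictly larger than the set $G$ of good projections. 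It is cleaner to never form $I$ until the end: prove directly, as the paper does, that $q=\P{W}$ equals its central carrier for every commutative $W$ containing the centre, and then read off the ideal as $z\mathcal{A}$ with $z=\P{\cz(\ca)}$.
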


Total ideals of a von Neumann algebra $\ca$ are in bijective correspondence with central projections $z$ of $\ca$: every total ideal $I$ is of the form $z\ca$ for a unique $z$ \cite{AlfsenShultz1}.  This allows us to rephrase the theorem in terms of projections which are vastly more convenient to work with.

\begin{defn}
A \emph{consistent family of projections} of a von Neumann algebra $\ca$ is a map $\Pf$ that assigns to each commutative sub-von-Neumann-algebra $V$ of $\ca$ a projection in $V$ such that:
\begin{enumerate}
  \item\label{item:dasein}
  for any $V$ and $V'$ such that $V \subset V'$, $\P{V}$ is the largest projection in $V$ which is less than or equal to $\P{V'}$, i.e.
  \[\P{V} = \sup\setdef{q \text{ is a projection in $V$}}{ q \leq \P{V'}} \Mdot\]
\end{enumerate}
An \emph{invariant family of projections} is such a map which further satisfies
\begin{enumerate}\setcounter{enumi}{1}
\item
  for any unitary element $u \in \ca$, $\P{uVu^*}$ = $u\P{V}u^*$.
\end{enumerate}
\end{defn}

The correspondence between total ideals and central projections yields correspondences between partial ideals (resp. invariant partial ideals) and consistent (resp. invariant) families of projections.  We therefore establish Theorem \ref{main} in the third section by proving an equivalent statement.
Just as was the case for ideals, any projection $p$ determines a consistent family of projections $\Ppf$ defined by choosing $\Pp{V}$ to be the largest projection $p$ in $V$ which is less than or equal to $p$.
For a central projection $z$, $\Pzf$ turns out to be an invariant family.
In the opposite direction, any consistent family of projections $\Pf$ gives a central projection  $\P{\cz(\ca})$ where $\cz(\ca)$ is the centre of $\ca$.

\begin{thm}[Principal theorem, reformulated] \label{main-reformulated}
A consistent family of projections of a von Neumann algebra arises from a central projection if and only if it is an invariant family of projections.
Consequently, the maps $z \longmapsto \Pzf$ and $\Pf \longmapsto \P{\cz(\ca)}$ define a bijective correspondence between central projections and invariant families of projections.
\end{thm}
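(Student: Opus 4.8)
The plan is to prove Theorem \ref{main-reformulated}, since Theorem \ref{main} follows by the stated correspondence between total ideals and central projections. The nontrivial direction is that an invariant family of projections $\Pf$ arises from a central projection; once this is in hand, the claimed bijection is routine. Indeed, $z \mapsto \Pzf$ is injective because $z = \Pz{\cz(\ca)}$ (the largest projection in the abelian algebra $\cz(\ca)$ below the central projection $z$ is $z$ itself), and composing the two maps in the other order recovers $\Pf$ precisely by the theorem we are trying to prove. So I would structure the paper around the single claim: \emph{if $\Pf$ is invariant, then $\P{V} = \Pz{V}$ for all $V$, where $z := \P{\cz(\ca)}$.}

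First I would record the elementary consequences of consistency. For a single projection $p$, write $p^V$ for $\sup\setdef{q\text{ a projection in }V}{q \le p}$; condition \eqref{item:dasein} says $\P{V} = \P{V'}^{V}$ whenever $V \subset V'$, and more generally that $\P{}$ is ``monotone'' in the sense that $\P{V} \le \P{V'}$ is false in general but $\P{V} = (\P{V'})^V \le \P{V'}$ does hold when $V\subseteq V'$. A key lemma I would isolate: if $p$ is a projection in $V$ with $p \le \P{V}$, then for the (still abelian) von Neumann algebra $W$ generated by $p$ inside $V$ we get $\P{W} \ge p$; combined with consistency applied to $W \subset V$, this lets one ``probe'' $\P{V}$ by small abelian subalgebras. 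I would also note that for any projection $r \in \ca$ (not necessarily central), invariance forces $\P{W}$ to be comparable across all unitary conjugates of the abelian algebra generated by $r$.

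The heart of the argument is to show $\P{V} \le z$ for every $V$, where $z = \P{\cz(\ca)}$; the reverse inequality $z^V \le \P{V}$ is immediate from consistency applied to $\cz(\ca) \cap V$... more carefully, one applies \eqref{item:dasein} to the abelian algebra generated by $\cz(\ca)$ together with $V$'s center, but the clean statement is that $\Pz{V} = z^V \le \P{V}$ follows because $z \in \cz(\ca)$ sits in a larger abelian algebra $V \vee \C z$ on which $\P{}$ is at least... this needs care, so I would prove $\Pz{V}\le \P{V}$ directly using consistency on $V \subseteq V\vee\{z\}''$ and the fact that $\P{V\vee\{z\}''}\ge z^{V\vee\{z\}''} = z$. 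For the crucial inequality $\P{V}\le z$: let $p = \P{V}$ and suppose $p \not\le z$, so $p(1-z) \ne 0$. Using the comparison theory of projections in a von Neumann algebra and invariance, I would find a unitary $u$ and a subprojection of $p(1-z)$ that gets moved to a position incompatible with the consistency requirement — concretely, in a non-central situation there exist unitaries whose conjugation action on $V$ is nontrivial, and tracking $\P{uVu^*} = u\P{V}u^*$ against $\P{}$ on the common abelian subalgebra $V \cap uVu^*$ yields that $\P{V}$ must lie in the fixed-point algebra of all inner automorphisms restricted appropriately, i.e. in $\cz(\ca)$. I expect this step — extracting a genuine contradiction from $p\not\le z$ by a well-chosen unitary, most likely via the halving lemma or by writing $p(1-z)$ as a sum of abelian/properly-infinite pieces and conjugating within a matrix subalgebra $M_2$ sitting inside $\ca$ — to be the main obstacle, and it is presumably where the bulk of the paper's technical preliminaries (reduction to factors, type decomposition, comparison of projections) are deployed.

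Once $\P{V} = \Pz{V}$ is established for all $V$, the theorem is complete: the family $\Pf$ equals $\Pzf$, so it arises from the central projection $z$; and the two displayed maps are mutually inverse bijections between central projections and invariant families, with injectivity of $z\mapsto \Pzf$ and the identity $\Pf \mapsto \P{\cz(\ca)} = z \mapsto \Pzf = \Pf$ both now immediate. I would also remark that the same bookkeeping shows the partial-ideal formulation (Theorem \ref{main}) follows verbatim, since $\pi_I \leftrightarrow \Pzf$ under $I = z\ca$.
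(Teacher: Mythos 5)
Your reduction to the single claim $\P{V}=\Pz{V}$ with $z=\P{\cz(\ca)}$, the bookkeeping for the bijection, and the easy inequality $\Pz{V}\le\P{V}$ (via $V\subseteq (V\cup\{z\})''$ and monotonicity) are all fine and match the paper's framing. But the entire content of the theorem lives in the step you defer --- showing $\P{V}\le z$, equivalently that $\P{W}$ is central for $W\supseteq\cz(\ca)$ --- and the mechanism you sketch for it does not work. Invariance gives $\P{uVu^*}=u\P{V}u^*$, which relates the values at \emph{two different} subalgebras; to conclude that $\P{V}$ is fixed by inner automorphisms you would need $\P{uVu^*}=\P{V}$, which is not among your hypotheses. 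Your proposal to compare the two via consistency on $V\cap uVu^*$ fails because that intersection can be trivial (e.g.\ just $\C 1$ plus the centre), in which case consistency yields no constraint at all. Likewise, a generic ``halving/$M_2$'' conjugation produces a projection that no longer commutes with $V$, so there is no common abelian algebra in which to invoke consistency, and no contradiction is extracted from $p\not\le z$.

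The paper closes this gap by arguing \emph{upward} rather than downward. Writing $q=\P{W}$ for $W\supseteq\cz(\ca)$, one first notes $\P{\V{q}}=q$ and hence, by invariance, $\P{\V{uqu^*}}=uqu^*$ for every unitary $u$. The key technical input (Lemma \ref{lemma:main}) is a Zorn's-lemma construction of a maximal \emph{partially orthogonal} --- hence commuting --- family $M$ inside the unitary orbit of $q$ with $q\in M$, together with a comparison-theorem argument showing that the remainder $\srem=C(q)-\sup M$ satisfies $\srem<uqu^*$ for a single further unitary $u$. Commutativity of $M$ lets one apply consistency in the large abelian algebra $\V{M}$ to get $\P{\V{s}}\ge s$, and then in $\V{s,uqu^*}$ one finds $\P{\V{s,uqu^*}}\ge s\vee\srem=C(q)$; since $C(q)$ is central it lies in $\V{uqu^*}$, forcing $uqu^*\ge C(q)$ and hence $q=C(q)$. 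Without something playing the role of this covering-plus-comparison argument (or an equivalent device for manufacturing a large commuting subfamily of the unitary orbit of $\P{V}$), your proof is incomplete at its central step.
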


\subsection{Motivation}

In this subsection, we motivate the characterization of total ideals as invariant partial ideals.

Previous work by the first author \cite{deSilva:DPhil-thesis}, inspired by ideas from the Isham-Butterfield approach to the Kochen-Specker theorem \cite{KochenSpecker} and the phenomenon of contextuality in quantum mechanics \cite{HamiltonIshamButterfield:topos3}, introduced a contravariantly functorial association of geometric objects $G(\ca)$ to  $C^*$-algebras $\ca$ which was meant to serve as a generalization of the notion of spectrum from the commutative to the noncommutative case.  It was shown that any functor $F: \cat{KHaus} \longrightarrow \cat{C}$ (from the category of compact Hausdorff spaces to a suitable target category $\cat{C}$) can be applied directly to the geometric objects given by $G$ to yield a functor $\tilde{F}: \cat{uC^*} \longrightarrow \cat{C}$ (from the category of unital $C^*$-algebras to $\cat{C}$).  The functor $\tilde{F}$ is an \emph{extension} of $F$ in the sense that, when restricted to the full subcategory of unital commutative $C^*$-algebras, $\tilde{F}|_{\cat{uComC^*}}$ is naturally isomorphic to $F$ composed with the Gel'fand spectrum functor.  The results of this method of extending topological concepts to $C^*$-algebraic ones can be compared with the constructions achieved via the canonical translation process of noncommutative geometry.  Agreement of the two methods could justify regarding $G(\ca)$ as the geometric manifestation of the noncommutative space underlying $\ca$.

It was demonstrated how the operator $K_0$ functor could be given a novel formulation in terms of $\tilde{K}$, the extension of topological $K$-theory \cite{deSilva:Ktheory}.  This raised the question of which other topological concepts can be automatically extended to their  noncommutative generalization by being applied directly to $G$.  It was conjectured that taking the extension of the notion of open subset  of a space (or equivalently, closed subset) would lead to the notion of closed, two-sided (i.e. total) ideal of a $C^*$-algebra.  To formalize this idea, let $\TT: \cat{KHaus} \longrightarrow \cat{CMSLat}$ be the functor which assigns to a compact Hausdorff topological space its complete lattice of closed sets (with $C_1 \leq C_2$ if and only if $C_1 \supset C_2$) and assigns to a continuous function the complete meet-semilattice homomorphism mapping a closed set to its image under the continuous function, let $\tilde{\TT}$ be its extension, and  let $\II$ be defined as in the remark following Definition \ref{pi}.

\begin{conj} \label{extenconj}
  The functors $\tilde{\TT}$ and $\II$ are naturally isomorphic.
\end{conj}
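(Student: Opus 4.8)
The plan is to prove the reformulated principal theorem (Theorem \ref{main-reformulated}), since the correspondence with total ideals and central projections is already in hand. The forward direction—that $\Pzf$ is an invariant family for $z$ central—should be routine: for a commutative subalgebra $V$, the largest projection in $V$ below $z$ is $zp$ for a suitable $p$, and since $z$ commutes with every unitary $u$, one checks directly that $u\Pz{V}u^* = \Pz{uVu^*}$. The real content is the converse: given an invariant family $\Pf$, set $z := \P{\cz(\ca)}$ and show $\Pf = \Pzf$.

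First I would verify that $z$ is central—this is immediate since $u\cz(\ca)u^* = \cz(\ca)$ for every unitary $u$, so invariance gives $uzu^* = z$, and the unitaries generate enough to conclude $z \in \cz(\ca)$. Next, the easy inequality: for any commutative $V$, consistency applied to the inclusion $\cz(\ca) \cap V \subseteq V$ together with monotonicity-type reasoning should give $\Pz{V} \leq \P{V}$, or at least pin down $\P{V}$ relative to central projections below it. The crux is the reverse inequality $\P{V} \leq z$ is \emph{false} in general (e.g. $\P{V}$ can be a non-central projection), so instead the goal is to show $\P{V}$ is exactly the largest projection in $V$ dominated by $z$, i.e. $\P{V} = \Pz{V}$.

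The main obstacle, and where I would concentrate effort, is propagating information from the center out to an arbitrary commutative $V$ using only the inclusion axiom \eqref{item:dasein} and unitary invariance. The key maneuver should be: given a projection $p = \P{V} \in V$, rotate $V$ by a well-chosen unitary $u$ so that $upu^*$ and $p$ together generate a larger commutative algebra, or so that $upu^*$ relates to $p$ in a controlled way (for instance, in a type $\mathrm{I}$ summand, conjugating a projection of a given "size" to an orthogonal one). Applying invariance to such $u$ and consistency to the inclusions $V, uVu^* \subseteq W$ where $W$ is a commutative algebra containing both relevant projections should force $\P{V}$ to be invariant under a family of rotations large enough that the only possibility is $\P{V} = z|_V$-type behavior. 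Concretely, I expect one shows: if $q \leq \P{V}$ is a projection and $uqu^* \perp q$ for some unitary $u$, then consistency in $V \vee uVu^*$ forces $q + uqu^* \leq \P{V \vee uVu^*}$, while a symmetric argument bounds it, and iterating/taking suprema squeezes $\P{V}$ down to its central part. Handling the general (non-type-$\mathrm{I}$, non-$\sigma$-finite) case will require the structure theory of von Neumann algebras—reducing to the type classification and using that in types $\mathrm{II}$ and $\mathrm{III}$ any two equivalent projections are unitarily conjugate, and in type $\mathrm{I}_n$ controlling things via the $n\times n$ matrix units—so the argument likely proceeds by cases on the type decomposition of $\ca$, with the finiteness/comparison theory of projections doing the heavy lifting.
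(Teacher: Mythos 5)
The statement you are addressing is Conjecture \ref{extenconj}, which concerns the $C^*$-algebraic functors $\tilde{\TT}$ and $\II$; the paper does not prove it and explicitly leaves it (together with Conjecture \ref{partialidealsc}, from which it is said to ``essentially follow'') open beyond the finite-dimensional case. Your proposal instead sketches a proof of Theorem \ref{main-reformulated}, the von Neumann algebraic analogue. This is not merely a different route but a different statement. The reduction to projections you take as your starting point---total ideals correspond to central projections, and invariant partial ideals to invariant consistent families of projections---is available only for von Neumann algebras: a commutative sub-$C^*$-algebra of the form $C(X)$ with $X$ connected contains no nontrivial projections, and a norm-closed two-sided ideal of a general $C^*$-algebra is not of the form $z\ca$. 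Even a complete proof of Theorem \ref{main-reformulated} therefore yields only the von Neumann case; to reach Conjecture \ref{extenconj} you would further need (i) the $C^*$-algebraic Conjecture \ref{partialidealsc} itself, and (ii) the identification of the limit lattice $\tilde{\TT}(\ca) = \llim \TT\, G(\ca)$ with the lattice of invariant partial ideals, naturally in $\ca$. Your proposal addresses neither.

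Even judged purely as a sketch of the von Neumann theorem, the plan diverges from the paper's argument at the decisive step. You propose a case analysis on the type decomposition and an iteration of ``rotate $q$ to an orthogonal copy'' moves, taking suprema to squeeze $\P{V}$ down to its central part; but orthogonal unitary conjugates need not exist (take $q$ large with central carrier $1$), and it is unclear that the iteration exhausts $C(q)$. The paper avoids type theory entirely: it introduces partial orthogonality, extracts by Zorn's lemma a maximal partially orthogonal subset $M$ of the unitary orbit of $q = \P{W}$ containing $q$, and uses the comparison theorem to show that the remainder $\srem = C(q) - \sup M$ is strictly dominated by a single conjugate $uqu^*$ (Lemma \ref{lemma:main}); consistency and invariance then give $\P{\V{s,uqu^*}} \geq s \vee \srem = C(q)$ and hence $q \geq C(q)$. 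Without something playing the role of that maximality-plus-comparison argument, your ``iterate and take suprema'' step is exactly where the proof would stall.
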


Formally, $G(\ca)$ is  a contravariant functor with, as its codomain, the category of compact Hausdorff spaces.  Consider the category $\cat{S}(\ca)$ whose objects are commutative sub-$C^*$-algebras of $\ca$ and whose morphisms are those which arise by restricting the domain and codomain of an inner automorphism of $\ca$ to such subalgebras.  Then $G(\ca)$ is the inclusion functor into $\cat{KHaus}$ of the image of $\cat{S}(\ca)$ under the Gel'fand spectrum functor.  The topological spaces in the diagram $G(\ca)$ should be thought of as being those which arise as quotient spaces of the hypothetical noncommutative space underlying $\ca$.  The morphisms of the diagram serve to track how these spaces fit together inside the noncommutative space underlying $\ca$.  The extension of a covariant functor $F: \cat{KHaus} \longrightarrow \cat{C}$ is defined as $\tilde{F} = \llim F G$.  Intuitively, one should think of the extension process as decomposing a noncommutative space into its quotient spaces, retaining those which are genuine topological spaces, applying the topological functor to each one, and pasting together the result.

Conjecture \ref{extenconj} essentially follows from Conjecture \ref{partialidealsc}.  To see this, note that the limit lattice $\tilde{\TT}(\ca) = \llim \TT G(\ca)$ is a cone over the diagram $\TT G(\ca)$.  
\[\xymatrix{
& & \ar@/_/[ddddll] \ar@/^/[ddddrr] L \ar@{.>}[dd] & &\\
&&&&\\
& & \ar[ddll]|{\pi \mapsto \pi(V)} \tilde{\TT}(\ca)  \ar[ddrr]|{\pi \mapsto \pi(V')} &&\\
&&&&\\
\II( V)  \ar[rrrr]_{\II(\ad|_{V}^{V'})}&&&& \II(V')
 }
\]

That is, the elements of $\tilde{\TT}(\ca)$ are precisely choices of elements from each $\II(V)$ subject to the condition of the remark following Definition \ref{ipi}. 

Proving Conjecture \ref{extenconj}  would establish a strong relationship between the topologies of the geometric object $G(\ca)$ and $\Prim(\ca)$, the primitive ideal space of $\ca$: we would be able to recover the lattice of the hull-kernel topology on $\Prim(\ca)$, as the limit of the topological lattices of the object $G(\ca)$.     (The primitive ideal space of $\ca$ is the set of ideals which are kernels of irreducible $*$-representations of $\ca$; in the commutative case, the primitive ideal space coincides with the Gel'fand spectrum.  It is a  $C^*$-algebraic variant of the ring-theoretic spectrum functor $\Spec$ which assigns  to a commutative ring a space whose hull-kernel topology provides the basis for sheaf-theoretic ring theory.)  Establishing this conjecture would allow considering $G$ to be an enrichment of $\Prim$.  This would open up the possibility of investigating the use of sheaf-theoretic methods in noncommutative geometry.

\section{Technical Preliminaries}

\subsection{Little lemmata}
In proving our main result, we shall make use of some simple properties of consistent families of projections which we record here as lemmata for clarity.

\begin{lem}\label{lemma:approxprops}
  Let $\ca$ be a von Neumann algebra, and $\Pf$ be a consistent family of projections.  Suppose $V \subset V'$ are commutative sub-von-Neumann-algebras of $\ca$. Then:
  \begin{enumerate}[label=(\arabic*)]
    \item\label{item:monotone} $\P{V} \leq \P{V'}$;
    \item\label{item:approxbest} if $p \in V$ and $p \leq \P{V'}$, then $p \leq \P{V}$;
    \item\label{item:approxbesteq} in particular, if $\P{V'} \in V$, then $\P{V'} = \P{V}$.
  \end{enumerate}
\end{lem}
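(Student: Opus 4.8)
The plan is to obtain all three statements by unwinding the defining property of a consistent family of projections. Recall that, for $V \subset V'$, the projection $\P{V}$ is the \emph{largest} projection in $V$ that is $\leq \P{V'}$; equivalently $\P{V} = \sup S$, where $S := \setdef{q}{q \text{ is a projection of } V \text{ and } q \leq \P{V'}}$. Implicit in this description is the standard fact that, $V$ being a von Neumann subalgebra of $\ca$, the supremum $\sup S$ — a priori formed in the complete projection lattice of $\ca$ — lies back in $V$, so that ``the largest projection of $V$ below $\P{V'}$'' genuinely exists; this holds because finite joins of projections of $V$ stay in $V$, and an arbitrary join is the ultraweak limit of the increasing net of its finite sub‑joins, $V$ being ultraweakly closed. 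I would either include this one‑line justification or simply cite it.

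Granting this, \ref{item:monotone} is immediate: by definition $\P{V}$ is one of the projections that is $\leq \P{V'}$, namely the largest one, so $\P{V} \leq \P{V'}$. For \ref{item:approxbest}, if $p$ is a projection of $V$ with $p \leq \P{V'}$, then $p \in S$, hence $p \leq \sup S = \P{V}$. (One may as well take $p$ a projection here, as in all intended applications; the case of a general positive $p \leq \P{V'}$ reduces to it by replacing $p$ with its range projection, which lies in $V$, still lies below $\P{V'}$, hence below $\P{V}$, and dominates $p$.) Finally, \ref{item:approxbesteq} is the instance $p = \P{V'}$ of \ref{item:approxbest}: it yields $\P{V'} \leq \P{V}$, which together with \ref{item:monotone} forces $\P{V} = \P{V'}$.

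I do not anticipate a genuine obstacle: the content of the lemma is the bookkeeping observation that the ``best approximation from within $V$'' property defining a consistent family behaves as one hopes when $V$ is enlarged. The only point meriting a moment's thought is the completeness of the inclusion of projection lattices noted above, and even that is a routine feature of von Neumann subalgebras; this lemma is recorded precisely in order to isolate these trivialities for convenient reuse later.
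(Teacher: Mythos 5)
Your proposal is correct and follows the same route as the paper, which likewise derives all three items directly from the defining "largest projection in $V$ below $\P{V'}$" property (with \ref{item:approxbesteq} as a special case of \ref{item:approxbest}); your extra remark on why the supremum lands back in $V$ is a harmless elaboration the paper leaves implicit.
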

\begin{proof}
  Properties \ref{item:monotone} and \ref{item:approxbest} are simply a rephrasing of the requirement in the definition of consistent family of projections that $\P{V}$ is the largest projection in $V$ smaller than $\P{V'}$. Property \ref{item:approxbesteq} is a particular case of \ref{item:approxbest}.
\end{proof}

Given a commutative subset $X$ of $\ca$,
denote by $\V{X}$ the commutative sub-von-Neumann-algebra of $\ca$ generated by $X$ and the centre $\cz(\ca)$,
i.e. $\V{X} = (X \cup \cz(\ca))''$.
Given a finite commutative set of projections $\{p_1,\ldots,p_n\}$, we write $\V{p_1, \ldots, p_n}$ for $\V{\{p_1, \ldots, p_n\}}$.

\begin{lem}\label{lemma:approxsupVm}
 Let $\ca$ be a von Neumann algebra; $\Pf$ a consistent family of projections; $M$ a commutative set of projections in $\ca$ such that $\P{\V{m}} \geq m$ for all $m \in M$; and  $s$ the supremum of the projections in $M$.
 Then $\P{\V{s}} \geq s$.
\end{lem}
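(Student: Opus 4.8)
\emph{Plan.} The idea is to avoid any limiting argument by absorbing the whole family $M$ into a single commutative sub-von-Neumann-algebra. Let $\V{M} = (M \cup \cz(\ca))''$; since $M$ is a commuting set of projections and every element commutes with the centre, $\V{M}$ is a commutative sub-von-Neumann-algebra of $\ca$. I will show $\P{\V{M}} \geq s$, and then the conclusion follows at once: $\V{s} \subseteq \V{M}$ because $\V{M}$, being a von Neumann algebra, contains the supremum $s$ of the projections $M \subseteq \V{M}$ (so $\{s\} \cup \cz(\ca) \subseteq \V{M}$); and $s$ is a projection in $\V{s}$ with $s \leq \P{\V{M}}$, so Lemma \ref{lemma:approxprops}\ref{item:approxbest} applied to $\V{s} \subseteq \V{M}$ gives $s \leq \P{\V{s}}$.

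\emph{Steps.} First, for each $m \in M$ we have $\{m\} \cup \cz(\ca) \subseteq M \cup \cz(\ca)$, and since the double commutant is inclusion-monotone this gives $\V{m} \subseteq \V{M}$. Lemma \ref{lemma:approxprops}\ref{item:monotone} then yields $\P{\V{m}} \leq \P{\V{M}}$, and combined with the hypothesis $m \leq \P{\V{m}}$ we obtain $m \leq \P{\V{M}}$ for every $m \in M$. Second, $\P{\V{M}}$ is a projection of $\V{M}$ dominating every $m \in M$; since the supremum of a family of projections in a von Neumann subalgebra agrees with the supremum computed in $\ca$, we conclude $s = \sup M \leq \P{\V{M}}$. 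Third, apply the little lemmata as in the plan to descend from $\V{M}$ to $\V{s}$.

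\emph{Where the difficulty lies.} There is no serious obstacle; the only temptation to resist is to prove the statement first for finite $M$ (which is immediate from monotonicity together with Lemma \ref{lemma:approxprops}\ref{item:approxbest}, i.e. the $\V{M}$ argument in miniature) and then try to pass to the limit inside $\V{s}$ along the increasing net $\sup F \nearrow s$ over finite $F \subseteq M$. This does not work smoothly, because $\V{\sup F}$ need not be contained in $\V{s}$, so the finite approximants have no evident relation to $\P{\V{s}}$. Working with $\V{M}$, which simultaneously contains all the $\V{m}$ and the algebra $\V{s}$, removes the need for any approximation: the infinite case costs nothing beyond the two-element case.
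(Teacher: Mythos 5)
Your argument is correct and is essentially identical to the paper's proof: both pass to the single commutative algebra $\V{M}$, use monotonicity to get $m \leq \P{\V{M}}$ for all $m$, conclude $s \leq \P{\V{M}}$, and then descend to $\V{s}$ via Lemma \ref{lemma:approxprops}-\ref{item:approxbest}. Nothing further is needed.
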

\begin{proof}
    For all $m \in M$, since $\V{m} \subseteq \V{M}$, we have
  \[\P{\V{M}} \geq \P{\V{m}} \geq m\]
  by Lemma \ref{lemma:approxprops}-\ref{item:monotone} and the assumption that $\P{\V{m}} \geq m$.
  Hence, $\P{\V{M}}$ is at least the supremum of the projections in $M$, i.e. $\P{\V{M}} \geq s$.
  Now, from $\V{s} \subset \V{M}$ and $s \in \V{s}$, we conclude by Lemma \ref{lemma:approxprops}-\ref{item:approxbest} that $s \leq \P{\V{s}}$.
\end{proof}
 

\subsection{Partial orthogonality}

\begin{defn}
Two projections $p$ and $q$ are {\em partially orthogonal} whenever there exists a central projection $z$ such that $zp$ and $zq$ are orthogonal while $z^\perp p$ and $z^\perp q$ are equal.
\end{defn}

Note that partially orthogonal projections necessarily commute
and that if $p_1$ and $p_2$ are partially orthogonal, so is the pair $zp_1$ and $zp_2$ for any central projection $z$.
A set of projections is {\em partially orthogonal} whenever any pair of projections in the set is partially orthogonal.
We will require in the sequel the following simple lemma:

%

\begin{lem}\label{polemma}
Let $p_1$ and $p_2$ be projections and $z$ be a central projection such that
$zp_1$ and $zp_2$ are partially orthogonal and $z^\perp p_1$ and $z^\perp p_2$ are partially orthogonal. 
Then $p_1$ and $p_2$ are partially orthogonal.
\end{lem}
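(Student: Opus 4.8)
The plan is to exhibit an explicit central projection witnessing the partial orthogonality of $p_1$ and $p_2$, obtained by gluing together the witnesses for the two hypotheses along the decomposition $1 = z + z^\perp$. First I would fix a central projection $z_1$ witnessing that $zp_1$ and $zp_2$ are partially orthogonal, so that $z_1 z p_1 \cdot z_1 z p_2 = 0$ and $z_1^\perp z p_1 = z_1^\perp z p_2$, and a central projection $z_2$ witnessing that $z^\perp p_1$ and $z^\perp p_2$ are partially orthogonal, so that $z_2 z^\perp p_1 \cdot z_2 z^\perp p_2 = 0$ and $z_2^\perp z^\perp p_1 = z_2^\perp z^\perp p_2$. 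Then I would take $w = z z_1 + z^\perp z_2$; this is a central projection since $z z_1$ and $z^\perp z_2$ are mutually orthogonal central projections (their product is a multiple of $z z^\perp = 0$).

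Next I would verify the two conditions in the definition of partial orthogonality for $w$. For orthogonality, expand $w p_1 \cdot w p_2$: using that $z, z^\perp, z_1, z_2$ are central and $z z^\perp = 0$, all cross terms vanish and what remains is $z z_1 p_1 p_2 + z^\perp z_2 p_1 p_2$, whose summands are precisely $z_1 z p_1 \cdot z_1 z p_2$ and $z_2 z^\perp p_1 \cdot z_2 z^\perp p_2$, both zero by hypothesis; hence $wp_1$ and $wp_2$ are orthogonal. For the equality condition, observe that $w^\perp = z z_1^\perp + z^\perp z_2^\perp$ (because $1 = z + z^\perp$), so $w^\perp p_1 = z_1^\perp z p_1 + z_2^\perp z^\perp p_1$ and similarly for $p_2$; the corresponding summands coincide by the equality parts of the two hypotheses, giving $w^\perp p_1 = w^\perp p_2$.

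Finally I would note that $wp_i$ and $w^\perp p_i$ are honest projections, being products of commuting projections, so the two computations genuinely exhibit $w$ as a witness in the sense of the definition, and the proof is complete. I do not expect a real obstacle here: the only care needed is the bookkeeping with central projections — freely permuting $z, z^\perp, z_1, z_2$ past $p_1$ and $p_2$, using idempotency, and being sure to check \emph{both} the orthogonality of the $w$-parts and the equality of the $w^\perp$-parts.
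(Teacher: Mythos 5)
Your proof is correct and follows essentially the same route as the paper's: both arguments glue the two witnessing central projections along the decomposition $1 = z + z^\perp$ to form a single witness ($w = zz_1 + z^\perp z_2$ in your notation, the complementary $yz + xz^\perp$ in the paper's, which differs only in whether the named witness carries the orthogonality or the equality part). Your explicit verification that the cross terms vanish and that $w^\perp = zz_1^\perp + z^\perp z_2^\perp$ matches the paper's one-line ``summing both statements'' computation.
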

\begin{proof}
As $zp_1$ and $zp_2$ are partially orthogonal, there exists a central projections $y$ such that
\[ yzp_1 = yzp_2 \;\; \text{ and } \;\;         y^\perp zp_1 \perp y^\perp zp_2 \Mdot\]
Similarly, as $z^\perp p_1$ and $z^\perp p_2$ are partially orthogonal, there exists a central projections $x$ such that
\[ xz^\perp p_1 = xz^\perp p_2 \;\; \text{ and } \;\; x^\perp z^\perp p_1 \perp x^\perp z^\perp p_2 \Mdot\]
Summing both statements above, we conclude that
\[(yz+xz^\perp )p_1 = (yz+xz^\perp )p_2 \;\; \text{ and } \;\; (y^\perp z+x^\perp z^\perp ) p_1 \perp (y^\perp z + x^\perp z^\perp )p_2 \Mcomma\]
where $yz+xz^\perp $ is a central projection and $(yz+xz^\perp )^\perp  = y^\perp z+x^\perp z^\perp $.
So, $p_1$ and $p_2$ are partially orthogonal.
\end{proof}

\subsection{Main lemma}
When comparing projections, we write $\leq$ to denote the usual order on projections, $\leqMvN$ for the order up to Murray-von Neumann equivalence, and $\lequ$ for the order up to unitary equivalence.

The following lemma is one of the main steps of the proof.
The idea is to start with a projection $q$ in a von Neumann algebra and to
cover, as much as possible, its central carrier $C(q)$ by a commutative subset of the unitary orbit of $q$.
 The lemma states that, in order to cover $C(q)$ with projections from the unitary orbit of $q$,
it suffices to take a commutative subset, $M$, and (at most) one other projection, $uqu^*$, which is strictly larger than the remainder $C(q) - \sup M$.
That is, the remainder from what can be covered by a commutative set
 is strictly smaller than $q$ up to unitary equivalence.

\begin{lem}\label{lemma:main}
  Let $q$ be a projection in a von Neumann algebra $\ca$.
  Then there exists a set $M$ of projections such that:
\begin{enumerate}[label=(\arabic*)]
\item\label{mainlemmaitem:first}
$q \in M$;
\item
$M$ is a subset of the unitary orbit of $q$;
\item\label{mainlemmaitem:penultimate}
$M$ is a commutative set;
\item
the supremum $s$ of $M$ satisfies
  $$\srem \lessu q$$ 
where $\srem = C(q) - s$.
\end{enumerate}
%
\end{lem}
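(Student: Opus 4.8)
The plan is to build $M$ by a transfinite/Zorn's-lemma argument: take $M$ to be maximal among the commutative subsets of the unitary orbit of $q$ that contain $q$ and have the property that each element $m \in M$ satisfies $\Phi_{V_m}$-type domination — more precisely, a subset $M$ such that, writing $s = \sup M$, the remainder $s^R = C(q) - s$ cannot be ``chipped away at'' by conjugating $q$. Let me be concrete. Consider the collection $\mathcal{F}$ of commutative subsets $M$ of the unitary orbit of $q$ with $q \in M$. Order $\mathcal{F}$ by inclusion; the union of a chain is again in $\mathcal{F}$ (commutativity and membership in the unitary orbit are preserved under directed unions), so by Zorn's lemma there is a maximal element $M$. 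Items \ref{mainlemmaitem:first}, (2), and \ref{mainlemmaitem:penultimate} hold by construction. The real content is item (4): that for this maximal $M$, with $s = \sup M$, we have $s^R = C(q) - s \prec_{_\mathbf{u}} q$.

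For item (4), first I would record two easy facts. Since each $m \in M$ lies in the unitary orbit of $q$, each $m$ is Murray–von Neumann equivalent to $q$, hence $m \leq C(q)$; therefore $s \leq C(q)$ and $s^R = C(q) - s$ is a genuine subprojection of $C(q)$, and it is central-carrier-dominated by $C(q)$. The claim $s^R \prec_{_\mathbf{u}} q$ should be unpacked as: there is a unitary $u$ with $s^R \leq u q u^*$ and $s^R \neq u q u^*$ — i.e. $s^R$ is strictly smaller than some conjugate of $q$. To produce such a $u$, I would argue by contradiction: suppose no conjugate of $q$ strictly dominates $s^R$. The key structural input is comparison theory in von Neumann algebras — the generalized comparison theorem gives a central projection $z$ with $z s^R \leqMvN z q$ and $z^\perp q \leqMvN z^\perp s^R$ (applied to the pair $s^R$ and $q$, both under the central carrier $C(q)$). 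On the piece $z^\perp$, $q$ embeds into $s^R$, so there is a partial isometry and, after adjusting by a unitary (using that we are inside a von Neumann algebra and $q, s^R \leq C(q)$, so we can extend partial isometries between equivalent subprojections of a common carrier to unitaries of the algebra — here one may need $z^\perp s^R$ and some complementary piece to have matching carriers, which is where partial orthogonality enters), we find a conjugate $u q u^*$ of $q$ with $u q u^* \leq z^\perp s^R \leq s^R$, in fact a conjugate that is partially orthogonal to every element of $M$ on the relevant central summand. Then $M \cup \{u q u^*\}$ would still be a commutative subset of the unitary orbit of $q$ containing $q$ — contradicting maximality of $M$ — unless $u q u^*$ is already the supremum-relevant piece, i.e. unless $s^R \leq u q u^*$, giving the desired strict (or equality-excluded) domination after a further small adjustment. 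The partial-orthogonality lemmata (Lemma \ref{polemma} and the remarks preceding it) are exactly what let me glue the $z$ and $z^\perp$ pieces of these conjugates back together into a single conjugate that still commutes with all of $M$.

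The main obstacle, I expect, is the last move: showing that maximality of $M$ forces $s^R$ to actually be \emph{dominated} by a single conjugate of $q$ rather than merely forcing no new orthogonal conjugate to exist. The subtlety is that adding a conjugate $u q u^*$ to $M$ only increases $\sup M$ if $u q u^*$ is partially orthogonal (hence the sup becomes $s + \text{(new orthogonal part)}$); if instead every conjugate of $q$ that one can fit ``under'' $C(q) - s$ overlaps $s$, the naive enlargement fails and one must instead decompose $C(q)$ via a central projection and handle the ``$q$ fits into the remainder'' and ``remainder fits into $q$'' cases separately, reassembling with Lemma \ref{polemma}. Managing the central-projection bookkeeping so that the reassembled conjugate both commutes with $M$ and witnesses $s^R \prec_{_\mathbf{u}} q$ — in particular ruling out $s^R = u q u^*$ on the nose, which would violate strictness — is the delicate point; I would handle the potential equality by noting that if $s^R = u q u^*$ for some unitary $u$, then $u q u^* \notin M$ (else $s^R \leq s$, forcing $s^R = 0 \prec_{_\mathbf{u}} q$ trivially) and $u q u^*$ is partially orthogonal to $s$, so $M \cup \{u q u^*\}$ contradicts maximality; hence the domination must be strict.
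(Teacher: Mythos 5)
Your skeleton---Zorn's lemma applied to commuting families inside the unitary orbit of $q$, the generalized comparison theorem producing a central projection $z$, gluing conjugates of $q$ across the central decomposition, and a contradiction with maximality---matches the paper's proof (which takes $M$ maximal \emph{partially orthogonal} rather than maximal commutative; that difference is harmless, since commutativity, like partial orthogonality, can be checked one central summand at a time). But your treatment of item (4) contains a concrete error and then stops before the conclusion. The error: you claim to produce a unitary $u$ with $uqu^* \leq z^\perp \srem$. No global conjugate of $q$ can satisfy this unless $zq=0$ (essentially the thing to be proved), because $C(uqu^*)=C(q)$, which is not dominated by $z^\perp$. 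The comparison only controls the compression of a conjugate to one central summand, and the move you are missing is to build a \emph{piecewise} unitary $v$ that acts as the comparison unitary on the summand where $q$ embeds into $\srem$ and as the identity on the complementary summand. On the first summand $vqv^*$ sits under $\srem$, hence is orthogonal to every $m \leq s$; on the second it equals $q$ itself, hence commutes with every $m \in M$ because $q \in M$; so $vqv^*$ commutes with all of $M$, and maximality gives $vqv^* \in M$, whence $vqv^* \leq s$. Its compression to the first summand is then below both $s$ and $\srem$, which are orthogonal, so that compression vanishes; this forces the corresponding compression of $q$ to vanish, and after normalizing the central projection to lie under $C(q)$ (legitimate since $C(q)^\perp q = C(q)^\perp \srem = 0$) the offending summand is zero, so the comparison reads $\srem \lessu q$ outright.

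This endgame is precisely the step you label ``the delicate point'' and leave unresolved, so the proposal is incomplete where the lemma has its content. Note also that the mechanism in your final paragraph is slightly misaimed: adjoining a conjugate that commutes with $M$ does not by itself contradict maximality---maximality merely tells you that conjugate was already in $M$, hence lies below $s$---and the contradiction (or rather the vanishing of the bad summand) comes from that conjugate simultaneously lying below $\srem$ on the relevant central piece. Your parenthetical about ``matching carriers'' is also not the right justification for upgrading the Murray--von Neumann comparison to a unitary one; what makes this work is that $q \leq s$ is orthogonal to $\srem$, so a partial isometry between $q$ and a subprojection of $\srem$ (or vice versa) extends to a self-adjoint unitary of the algebra.
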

\begin{proof}
Let $O$ be the unitary orbit of $q$.  The partially orthogonal subsets of $O$  which contain $q$ form a poset under inclusion.
Given a chain in this poset, its union is partially orthogonal: any two projections in the union must appear together somewhere in one subset in the chain and are thus partially orthogonal. Hence, by Zorn's lemma, we can construct a maximal partially orthogonal subset $M$ of the unitary orbit of $q$ such that $q \in M$. Clearly, $M$ satisfies conditions \ref{mainlemmaitem:first}--\ref{mainlemmaitem:penultimate}. 

Denote by $s$ the supremum of the projections in $M$.
Its central carrier $C(s)$ is equal to the central carrier $C(q)$ of $q$.  This is because $C(-)$ is constant on unitary orbits and $C(\sup_{m \in M}m) = \sup_{m \in M}C(m)$.
We now need to show that $\srem \lessu q$.

By the comparison lemma for projections in a von Neumann algebra, there is a central projection $z$ such that
\[z\srem \geqMvN  zq \;\;\text{ and }\;\; z^\perp \srem \prec_{\mathbf{_M}} z^\perp q \Mdot\]
We can assume without loss of generality that $z \leq C(q)$ since $$C(q)^\perp q = C(q)^\perp \srem = 0 \Mdot$$
Moreover, as $s$ and $\srem$ are orthogonal, there is a unitary which witnesses these order relationships.
That is, there is a unitary $u$ such that 
\[z\srem\geq z(uqu^*)  \;\;\text{ and }\;\; z^\perp \srem < z^\perp (uqu^*) \Mdot\]
We will show that $z$ vanishes and thus conclude that $\srem < uqu^*$ as required.

Define $v$ to be the unitary $zu + z^\perp 1$ which acts as $u$ within the range of $z$ and as the identity on range of $z^\perp $.
We first establish that $vqv^*$ and $m$ are partially orthogonal for every $m \in M$.  

Let $m \in M$. As $M$ was defined to be a partially orthogonal set of projections and $q \in M$,
we know that $q$ and $m$ are partially orthogonal, and thus that $z^\perp q$ and $z^\perp m$ are partially orthogonal.
However, as $z^\perp v = z^\perp $, we may express this as: $z^\perp (vqv^*)$ and $z^\perp m$ are partially orthogonal.
Now, on the range of $z$, we have that
\[z(vqv^*) = z(uqu^*) \leq z\srem \;\;\text{ and }\;\; zm \leq zs \Mcomma\]
implying that $z(vqv^*)$ and $zm$ are orthogonal, hence partially orthogonal.
Putting both parts together, we have that $z^\perp vqv^*$ and $z^\perp m$ are partially orthogonal and that $z(vqv^*)$ and $zm$ are partially orthogonal.
We may thus apply Lemma \ref{polemma} and conclude that $vqv^*$ and $m$ are partially orthogonal as desired.


Having established that $vqv^*$ is partially orthogonal to all the projections in $M$, it follows by maximality of $M$ that $vqv^* \in M$.
Hence, \[z vqv^* \leq vqv^* \leq \sup M = s \Mdot\]
Yet, by construction, 
\[zvqv^* = zuqu^* \leq z\srem \leq \srem\Mcomma\] and so $zvqv^*$ must be orthogonal to $s$.
Being both contained within and orthogonal to $s$, $zvqv^*$ must vanish. Therefore, the unitarily equivalent projection $zq$ must also vanish.
Now, $z \leq C(q)$ and $zq = 0$ forces $z$ to be zero, for otherwise $C(q) - z$ would be both a central projection covering $q$ and also strictly smaller than the central carrier of $q$.
We may finally  conclude that $\srem < uqu^*$.
\end{proof}

\section{Main theorem}

Theorem \ref{main-reformulated}, and thus our principal result, Theorem \ref{main},
will follow as an immediate corollary of: 
\begin{thm}
  In a von Neumann algebra $\ca$, any invariant family of projections $\Pf$ arises from a central projection, i.e. $\Pf$ is equal to $\Pzf$ for the  central projection $z = \Pf(\cz(\ca))$.
\end{thm}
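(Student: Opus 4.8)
The plan is to fix an invariant family of projections $\Pf$, set $z = \Pf(\cz(\ca))$, and show $\P{V} = \Pz{V}$ for every commutative sub-von-Neumann-algebra $V$. Since $\Pz{V}$ is by definition the largest projection in $V$ below $z$, and since $z \in \cz(\ca) \subseteq V$ forces (by Lemma \ref{lemma:approxprops}-\ref{item:monotone} applied to $\cz(\ca) \subseteq V$) the inequality $\P{V} \geq \P{\cz(\ca)} = z$... wait, that is backwards: monotonicity gives $\P{\cz(\ca)} \leq \P{V}$, i.e. $z \leq \P{V}$. So one inclusion, $\P{V} \geq \Pz{V}$, is immediate once we know $z \leq \P{V}$; the real content is the reverse, $\P{V} \leq z$, equivalently $\P{V} \leq \P{\cz(\ca)}$. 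Since $\P{V} \in V$ and $\P{V} \in V_{\P{V}}$ (the algebra generated by $\P{V}$ and the centre), and $V_{\P{V}} \subseteq V$, Lemma \ref{lemma:approxprops}-\ref{item:approxbesteq} reduces everything to the case $V = V_q$ for a single projection $q$, where we must show $\P{V_q} \leq z$. Equivalently, writing $q' = \P{V_q}$, we must show $q' \leq z = \P{\cz(\ca)}$, and since $q'$ is a projection in the centre's... no, $q'$ need not be central. The right reduction: it suffices to prove that for every projection $q$ with $\P{V_q} = q$ (i.e. $q$ ``belongs to its own partial ideal''), the central carrier estimate forces $q \leq z$.

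Here is where Lemma \ref{lemma:main} enters, and it is the crux. Suppose toward a contradiction that $\P{V_q} \geq q$ but $q \not\leq z$; replacing $q$ by $C(q)^\perp \cdots$ appropriately, we may assume $C(q) \wedge z = 0$ or work on the complementary central summand, so WLOG $q \neq 0$ and we want to derive that some nonzero piece of $q$ lies below $z$. Apply Lemma \ref{lemma:main} to $q$: we get a commutative, partially-orthogonal set $M \subseteq$ (unitary orbit of $q$) with $q \in M$ and $\srem = C(q) - \sup M$ satisfying $\srem \lessu q$. By invariance of $\Pf$, for each $m = uqu^* \in M$ we have $\P{V_m} = \P{u V_q u^*} = u \P{V_q} u^* \geq u q u^* = m$ (using that conjugation by $u$ fixes the centre, so $u V_q u^* = V_m$, and that $\P{V_q} \geq q$). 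So $M$ satisfies the hypothesis of Lemma \ref{lemma:approxsupVm}, whence $\P{V_s} \geq s$ where $s = \sup M$. The plan from here: feed the leftover $\srem$ back in. Since $\srem \lessu q$, write $\srem \leq wqw^*$ for some unitary $w$, and by invariance again $\P{V_{wqw^*}} \geq wqw^*$, so $\P{V_{wqw^*}} \geq \srem$; combined with $\P{V_s} \geq s$ and using Lemma \ref{lemma:approxsupVm} once more on the two-element set $\{s, \srem\}$ (or rather on $\{s, wqw^*\}$, noting $s + \srem = C(q)$), we obtain $\P{V_{C(q)}} \geq C(q)$, hence $\P{V_{C(q)}} = C(q)$; but $C(q)$ is central, so $V_{C(q)} = \cz(\ca)$ and thus $C(q) \leq \P{\cz(\ca)} = z$, giving $q \leq C(q) \leq z$ — contradicting $q \not\leq z$ unless $q = 0$. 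Running this argument on the central summand where $q \wedge z^\perp$... more precisely, decomposing $q = zq + z^\perp q$ and applying the above to $z^\perp q$ (which satisfies $\P{V_{z^\perp q}} \geq z^\perp q$ since $z^\perp$ is central) forces $z^\perp q = 0$, i.e. $q \leq z$.

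The main obstacle I anticipate is making the ``feed the leftover back in'' step fully rigorous: Lemma \ref{lemma:main} only promises $\srem \lessu q$ strictly, and one must be careful that the set $M$ together with one extra conjugate of $q$ covering $\srem$ is still handled correctly by Lemma \ref{lemma:approxsupVm} — that lemma wants all elements to be conjugates of projections whose partial ideal contains them, and a projection dominating $\srem$ from the unitary orbit of $q$ does have that property by invariance, so $\{s\} \cup \{\text{that conjugate}\}$... but $s$ itself is a supremum, not a single conjugate, so one should instead apply Lemma \ref{lemma:approxsupVm} directly to the set $M \cup \{wqw^*\}$, whose supremum is $C(q)$ since it dominates both $s$ and $\srem = C(q) - s$. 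One must check $M \cup \{wqw^*\}$ is a commutative set of projections with each element's partial ideal containing it — commutativity is the delicate point and may require choosing $w$ so that $wqw^*$ commutes with $M$; since $\srem$ is orthogonal to $s = \sup M$ it commutes with all of $M$, and $wqw^*$ can be taken inside $\srem\ca\srem \oplus (\text{something below }s)$... Actually the cleanest route avoiding commutativity worries: apply Lemma \ref{lemma:approxsupVm} to $M$ alone to get $\P{V_s} \geq s$; separately note $\P{V_\srem} \geq \srem$ because $\srem \leq wqw^*$ and $\P{V_{wqw^*}} \geq wqw^* \geq \srem$ so by Lemma \ref{lemma:approxprops}-\ref{item:approxbest} (with $V_\srem \subseteq V_{wqw^*}$, using $\srem \in V_\srem$) we get $\srem \leq \P{V_\srem}$; finally apply Lemma \ref{lemma:approxsupVm} to the two-element commutative set $\{s, \srem\}$ (these commute, being orthogonal) to conclude $\P{V_{s + \srem}} = \P{V_{C(q)}} \geq s + \srem = C(q)$. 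That three-invocation structure should close the argument cleanly.
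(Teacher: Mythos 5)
Your argument is essentially the paper's own proof: the same reduction to subalgebras containing the centre, the same use of Lemma \ref{lemma:main} to produce $M$, $s$, and $\srem$, the same propagation of $\P{\V{q}} \geq q$ along the unitary orbit via invariance, and the same appeal to Lemma \ref{lemma:approxsupVm} to cover $C(q)$. Only the endgame differs cosmetically: you push $C(q)$ down into $\cz(\ca) = \V{C(q)}$ to conclude $C(q) \leq z$ directly, whereas the paper pulls $C(q)$ back through $\V{uqu^*}$ and the unitary to conclude $q \geq C(q)$, i.e.\ that $q$ is central; both close the argument.

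One justification in your ``cleanest route'' is wrong as stated: the inclusion $\V{\srem} \subseteq \V{wqw^*}$ need not hold. A subprojection $\srem < wqw^*$ generally does not lie in the von Neumann algebra generated by $wqw^*$ and the centre (the projections of $\V{wqw^*}$ have the form $z_1\, wqw^* + z_2\, (1 - wqw^*)$ with $z_1, z_2$ central, so membership would force $\srem$ to be a central multiple of $wqw^*$). Hence Lemma \ref{lemma:approxprops}-\ref{item:approxbest} cannot be applied to that pair of algebras. The repair is exactly the manoeuvre the paper performs with $\V{s, uqu^*}$: since $\srem$ and $wqw^*$ commute, pass to $\V{\srem, wqw^*}$, which contains both $\V{\srem}$ and $\V{wqw^*}$; monotonicity gives $\P{\V{\srem, wqw^*}} \geq \P{\V{wqw^*}} \geq wqw^* \geq \srem$, and then Lemma \ref{lemma:approxprops}-\ref{item:approxbest} applied to $\V{\srem} \subseteq \V{\srem, wqw^*}$ yields $\P{\V{\srem}} \geq \srem$. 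A similar caveat applies to your opening reduction: for a commutative subalgebra $V$ not containing $\cz(\ca)$, the inclusion $\V{\P{V}} \subseteq V$ fails, and one should instead compare $V$ with the algebra generated by $V$ and the centre, as in the paper's final paragraph. With these two repairs your proof goes through.
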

\begin{proof}
Let $\Pf$ be an invariant family of projections.
Suppose $W $ is a commutative sub-von-Neumann-algebra of $\ca$ which contains the centre $\cz(\ca)$, and let $q$ be the projection $\P{W}$.
We claim that $q$ is, in fact, equal to its own central carrier $C(q)$ and thus central.
As $q \leq C(q)$ is true by definition, we must show that $q \geq C(q)$.

We start by applying Lemma \ref{lemma:main} to  $q$. Let $M$ denote the resulting commuting set of projections in the unitary orbit of $q$,
$s$ denote the supremum of the projections in $M$, and $\srem$ denote $C(q) - s = C(s) - s$. From the lemma, we know that $\srem \prec_u q$, i.e. there exists a unitary $u$ such that $\srem < u q u^*$.


First note that, since $\V{q} \subset W$ and $q \in \V{q}$,  by Lemma \ref{lemma:approxprops}-\ref{item:approxbesteq}, we have that  $\P{\V{q}} = q$.
Then, by unitary invariance of the family of projections, for every $m \in M$ we have that $\P{\V{m}} = m$.
Hence, we can apply Lemma \ref{lemma:approxsupVm} to conclude that $\P{\V{s}}\geq s$.
 We also conclude, again by unitary invariance of $\Pf$, that $\P{\V{uqu^*}} = uqu^* \geq \srem$.

Now, note that $uqu^*$ and $\srem$ commute and that $\V{s} = \V{\srem}$. So there is a commutative sub-von-Neumann-algebra
$\V{s, uqu^*} \supseteq \V{s}, \V{uqu^*}$.
By Lemma \ref{lemma:approxprops}-\ref{item:monotone} and the two conclusions of the preceding  paragraph, we then have
\[\P{\V{s, uqu^*}} \geq \P{\V{s}} \vee \P{\V{uqu^*}} \geq s \vee \srem = C(q) \Mdot\]
But, since $C(q) \in \V{uqu^*}$ by virtue of being contained    in the centre, we can apply Lemma \ref{lemma:approxprops}-\ref{item:approxbest} to find that $\P{\V{uqu^*}} \geq C(q)$.
Finally, by unitary invariance, \[q = \P{\V{q}} \geq u^*C(q)u = C(q) \Mcomma\] concluding the proof that $q$ is central.


We have shown that the projection $\P{W}$ is central for every commutative sub-von-Neumann-algebra $W$ containing the centre $\cz(\ca)$.
By Lemma \ref{lemma:approxprops}-\ref{item:approxbesteq}, this means that $\P{W}$ is equal to $\P{\cz(\ca)}$, the projection chosen at the centre, for all such $W$. In turn, this determines the image of $\Pf$ on all commutative sub-von-Neumann-algebras $W'$ as 
\[\P{W'} = \sup\setdef{p \text{ is a projection in $W'$}}{p \leq \P{V_{W'\cup \cz(\ca)}} = \P{\cz(\ca)}} \Mcomma\]
and we find that $\Pf$ must be equal to $\Pf_{\P{\cz(\ca)}}$.
\end{proof}

\section{Conclusions}
Akemann and Pedersen \cite{pedersen} proposed to replace the translation process of noncommutative geometry by working directly with Giles and Kummer's \cite{GilesKummer} and Akemann's \cite{akemann} noncommutative generalizations of the basic topological notions of open and closed sets: open and closed projections in an enveloping von Neumann algebra.  In contrast, the framework proposed by the first author does not employ algebraic generalizations of basic topological notions, but rather, works with objects which generalize the notion of topological space and come readily equipped with an alternative to the translation process.
  Conjecture \ref{partialidealsc} is essentially the guess that the translation of the notion of closed set by this method matches up with the algebraic concept one would expect: closed, two-sided ideal.  It would also recover the hull-kernel topology on the primitive ideal space of a $C^*$-algebra$\ca$         as  a limit of the topologies; topologies of quotient spaces of the noncommutative space underlying $\ca$.

We have established the von Neumman algebraic analogue of Conjecture \ref{partialidealsc}.
 As a consequence, the original $C^*$-algebraic conjecture holds for all finite-dimensional $C^*$-algebras.  The question of whether it holds for all  $C^*$-algebras remains open. 
We conclude by indicating some ideas for future work that may lead to progress on this question.

One possible tack would be to enlarge the class of $C^*$-algebras for which the conjecture holds.  An immediate suggestion would be the class of $AF$-algebras which arise as limits of finite-dimensional $C^*$-algebras; it would follow immediately from a proof that $\tilde{T}$ preserves limits.

Another possibility would be to prove the whole conjecture directly by using the proof of the von Neumann algebraic version as a guide.  Indeed, one might still be able to reduce the question to one about projections by working in the enveloping von Neumann algebra $\ca^{**}$ of a $C^*$-algebra $\ca$. 
In this setting, the total ideals of a $C^*$-algebra $\ca$ correspond to certain total ideals of the enveloping algebra $\ca^{**}$ \cite{AlfsenShultz1}: those which correspond to open central projections. 
In essence, one would have to prove the appropriate analogue of Theorem \ref{main-reformulated} in order to find a correspondence between central open projections of $\ca^{**}$ and certain families of open projections which obey a restricted form of unitary invariance.

\section*{Acknowledgements}


It is our pleasure to thank Samson Abramsky, Bob Coecke, Andreas D\"oring, George Elliott, Chris Heunen, Kobi Kremnitzer, Manny Reyes, and participants of the Operator Algebras Seminar at the Fields Institute for their guidance, encouragement, and mathematical insights. 

NDS gratefully acknowledges  support from Merton College, the Oxford University Computing Laboratory, the Clarendon Fund, and the Natural Sciences and Engineering Research Council of Canada.

RSB gratefully acknowledges support from the
Marie Curie Initial Training Network MALOA -- From MAthematical LOgic to Applications, PITN-GA-2009-238381,
and from FCT -- Funda\c{c}\~{a}o para a Ci\^{e}ncia e Tecnologia (the Portuguese Foundation for Science and Technology),
PhD grant SFRH/BD/94945/2013.

\bibliographystyle{amsplain}
\bibliography{paperideals-refs}

 \end{document}